\numberwithin{equation}{section}
\newtheorem{theorem}{Theorem}[section]
\theoremstyle{definition}
\newtheorem{definition}[theorem]{Definition}
\newtheorem{remark}[theorem]{Remark}
\numberwithin{equation}{section}
\begin{document}
\title[The smallest normalized signless $\infty$-Laplacian eigenvalue]{The smallest normalized signless $\infty$-Laplacian eigenvalue for non-bipartite connected graphs}
\date{December 29, 2024.}
\author[Y. Dai]{Yi Dai}
\address{Yi Dai, School of Mathematics and Statistics, Shaanxi Normal University, Xi'an 710062, China.}
\email{daiyi@snnu.edu.cn}

\subjclass[2020]{90C47; 90C27; 15A60; 15A09; 05C50}
\keywords{Min-max optimization, signless Laplacian eigenvalue, generalized inverse, norm.}

\begin{abstract}
\noindent
In this paper, we aim to study the smallest normalized signless $\infty$-Laplacian eigenvalue $\mu_{\infty}$, a generalisation of the smallest signless Laplacian eigenvalue.
For a non-bipartite connected graph, we show that the invariant $\mu_{\infty}$ equals to the reciprocal of the minimal $\infty$-norm of the generalized inverses of the weighted signless incidence matrix.
An example is also given to illustrate the result.
\end{abstract}
\maketitle


\section{Introduction}
\allowdisplaybreaks[4]
The graph invariants of Laplacian matrix and signless Laplacian matrix are well studied, see \cite{2,8,12,14,15,16,17,18}. As one of typical objects in the linear spectral theory, the eigenvalues of Laplacian matrix and signless Laplacian matrix are directly related to the connectedness and bipartiteness of graph.

Let $\Omega=(V,E)$ be a non-bipartite connected graph. Note that the signless Laplacian matrix $Q$ of $\Omega$ is positive semi-definite. For $X=(x_i,v_i\in V)\in \mathbb{R}^{V}$, the quadratic form of $Q$ is $$X^{\mathrm{T}}QX=\sum_{i\sim j}(x_{i}+x_{j})^2=\parallel BX\parallel^2_2,$$
where $i\sim j$ denotes vertex $v_{i}$ is adjacent to $v_{j}$. The smallest eigenvalue of $Q$ is
$$q(\Omega)=\min_{X\in \mathbb{R}^{V}\backslash \{\mathbf{0}\}}\dfrac{X^{\mathrm{T}}QX}{\parallel X\parallel^2_2}=\min_{X\in \mathbb{R}^{V}\backslash \{\mathbf{0}\}}\dfrac{\parallel BX\parallel^2_2}{\parallel X\parallel^2_2}=\min_{\parallel X\parallel^2_2=1}\sum_{i\sim j}(x_{i}+x_{j})^2.$$
In \cite{9}, Desai and Rao showed that the smallest eigenvalue $q(\Omega)$ is bounded above and below by functions of the measure of the bipartiteness of $\Omega$.

The signless $p$-Laplacian is a nonlinear generalization of the quadratic form of the signless Laplacian matrix. For any $p\geq 1$, the smallest signless $p$-Laplacian eigenvalue
$$q_p(\Omega)=\min_{X\in \mathbb{R}^{V}\backslash \{\mathbf{0}\}}\dfrac{\parallel BX\parallel^p_p}{\parallel X\parallel^p_p}=\min_{\parallel X\parallel^p_p=1}\sum_{i\sim j}(x_{i}+x_{j})^p.$$
In \cite{4}, Borba and Schwerdtfeger proved that
$$q_\infty(\Omega)=\min_{\parallel X\parallel_\infty=1}\max_{i\sim j}|x_{i}+x_{j}|=\frac{2}{\max_{u\in V}l(u)},$$
where $l(u)$ is the length of the smallest odd cycle in graph $\Omega$.
In \cite{6}, Chang proved that the first nonzero 1-Laplacian eigenvalue equals to Cheeger's constant, which had only some upper and lower bounds in 2-Laplacian spectral theory. In \cite{7}, Chang, Shao, and Zhang introduced and developed the non-linear spectral theory of signless 1-Laplacian. They showed how the non-bipartiteness of a graph is related to the smallest normalized  signless 1-Laplacian, which can be rewritten as follows:
$$\min_{\|X\|_{1}=1}\|\bigtriangleup_{1}X\|_{1}=\min_{\|X\|_{1}=1} \sum_{i\sim j}|\frac{x_i}{d_i}+\frac{x_j}{d_j}|.$$

In this paper, for a non-bipartite connected graph $\Omega$, we study the smallest normalized  signless $\infty$-Laplacian eigenvalue
\begin{equation}\label{equ111}
\mu_{\infty}(\Omega)=\min_{\|X\|_{\infty}=1}\|\bigtriangleup_{\infty}X\|_{\infty}=
\min_{\|X\|_{\infty}=1} \max_{i\sim j} |\frac{x_i}{d_i}+\frac{x_j}{d_j}|.
\end{equation}
We provide an equivalent algebraic characterization for this invariant.
In Section \ref{sec:2}, we recall some notations and definitions.
In Section \ref{sec:3}, we prove that $\mu_{\infty}$ equals to the reciprocal  of the minimal $\infty$-norm of generalized inverses of weighted signless incidence matrix $W$ of graph, i.e.,
$$\min\limits_{\|X\|_{\infty}=1}\|WX\|_{\infty}=\frac{1}{\min\limits_{WGW=W}\|G\|_{\infty, \infty}}.$$
In Section \ref{sec:4}, we apply the result to a class of non-bipartite bicyclic connected graph.

\section{Preliminary }\label{sec:2}

In this paper, all vectors and matrices are real. We use $\mathbb{R}^{n}$ to denote the set of all $n$ dimensional column vectors over $\mathbb{R}$ and  $\mathbb{R}^{m\times n}$ to denote the set of all $m\times n$ matrices over $\mathbb{R}$.

\begin{definition}\cite{13}
The infinity norm of a vector
$x=(x_{1}, x_{2}, \cdots, x_{n})^\mathrm{T}\in\mathbb{R}^{n}$
is $\|x\|_{\infty}=\max\{|x_{i}|: i=1,2,\cdots,n\}$.
The induced matrix norm  for a real $m\times n$ matrix $A=(a_{ij})$ is $$\|A\|_{\infty, \infty}= \max\{\|AX\|_{\infty}:\|X\|_{\infty}=1\}=
\max_{1\leq i\leq m}\sum_{j=1}^{n}|a_{ij}|.$$
\end{definition}

\begin{definition}\cite{5}
Let $\Omega$ be a graph without loops or multiple edges, whose vertex set and  edge set are $V=\{v_{1}, v_{2}, \cdots, v_{n}\}$ and  $E=\{e_{1}, e_{2}, \cdots, e_{m}\}$.
Let $B=(b_{ij})_{m\times n}$ be the edge-vertex incidence matrix of $\Omega$, where
$$ b_{ij}=\left\{
\begin{array}{rcl}
	1, &      & {v_{j} \ \mathrm{is \ incident \ to} \ e_{i}, }\\
	0, &      & {\mathrm{otherwise}.}
\end{array} \right. $$
Let $D=diag(d_1, d_2, \cdots, d_n)$, where $d_i$ is the degree of vertex $v_{i}$, $i=1,2,\cdots,n$.
Define $W=BD^{-1}$ as the weighted signless edge-vertex incidence matrix of $\Omega$.
\end{definition}

\begin{remark}
By equation (\ref{equ111}), it is easy to check that the smallest normalized signless $\infty$-Laplacian eigenvalue for graph $\Omega$ is $$\mu_{\infty}(\Omega)=\min_{\|X\|_{\infty}=1}{\|WX\|_{\infty}}=
\min_{\|X\|_{\infty}=1} \max_{i\sim j}  |\frac{x_i}{d_i}+ \frac{x_j}{d_j}|,$$
where $X=(x_i,v_i\in V)\in \mathbb{R}^{V}$ and $i\sim j$ denotes vertex $v_{i}$ is adjacent to $v_{j}$.
\end{remark}

\begin{definition}\cite{3}
Let $W$ be a real $m\times n$ matrix. Then a real $n\times m$ matrix $G$ is called a generalized inverse of $W$ if $$WGW=W.$$
In particular,  if $W$ has full column rank,  then  a generalized inverse $G$  of $W$ should be left inverse of $W$,  i.e.,  $$GW=I.$$
\end{definition}

Note that there may be many generalized inverses for a matrix. The study of generalized inverses with certain  minimal norms is important in  matrix analysis, see \cite{10,11}.

\section{Main theorem and its proof}\label{sec:3}

\begin{theorem}\label{thm3.1}
Suppose that $\Omega$ is a non-bipartite connected graph with $n$ vertices and $m$ edges where $m\geq n$. Let $W$ be the weighted signless edge-vertex incidence matrix of $\Omega$. Then
$$\mu_{\infty}(\Omega)=\min\limits_{\|X\|_{\infty}=1}\|WX\|_{\infty}=\frac{1}{\min\limits_{WGW=W}\|G\|_{\infty, \infty}}.$$
\end{theorem}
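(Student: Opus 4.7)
The plan is to establish two inequalities. Set $\gamma = \min_{WGW=W}\|G\|_{\infty,\infty}$. First I would record the structural fact that, since $\Omega$ is non-bipartite and connected, the signless Laplacian $B^{\mathrm{T}}B$ is positive definite; hence $W=BD^{-1}$ has full column rank $n$, every generalized inverse $G$ of $W$ is in fact a left inverse ($GW=I_n$), and $\mu_\infty(\Omega)>0$.

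The easy direction $\mu_\infty(\Omega)\ge 1/\gamma$ is a one-line application of submultiplicativity: for any $G$ with $GW=I_n$ and any $X$ with $\|X\|_\infty=1$,
\[
1=\|X\|_\infty=\|GWX\|_\infty\le\|G\|_{\infty,\infty}\|WX\|_\infty,
\]
so $\|WX\|_\infty\ge 1/\|G\|_{\infty,\infty}$; minimizing over $X$ and then over $G$ gives the bound.

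The harder direction is $\mu_\infty(\Omega)\le 1/\gamma$, and this is where the main work lies. My approach is to exploit the fact that $\|G\|_{\infty,\infty}=\max_{1\le i\le n}\|g_i\|_1$, where $g_i^{\mathrm{T}}$ is the $i$-th row of $G$, and that the constraint $GW=I_n$ decouples as $W^{\mathrm{T}}g_i=e_i$ for each $i$. Because the rows are independent and the objective is the pointwise maximum, a short exchange argument yields
\[
\gamma=\max_{1\le i\le n}\,\min_{W^{\mathrm{T}}g=e_i}\|g\|_1.
\]
For each $i$, the inner problem is a standard $\ell^1$-minimization with equality constraints; strong LP duality (applicable since $W$ has full column rank, so the primal is feasible, and since $\mu_\infty>0$, the dual is bounded) gives
\[
\min_{W^{\mathrm{T}}g=e_i}\|g\|_1=\max\bigl\{e_i^{\mathrm{T}}y:\|Wy\|_\infty\le 1\bigr\}=\max\bigl\{|y_i|:\|Wy\|_\infty\le 1\bigr\},
\]
the last equality using that the feasible set is symmetric under $y\mapsto -y$.

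Combining and interchanging the two maxima yields
\[
\gamma=\max_{\|Wy\|_\infty\le 1}\|y\|_\infty,
\]
and a homogeneity rescaling $y=tX$ with $\|X\|_\infty=1$ and $t=1/\|WX\|_\infty$ identifies this quantity with $1/\min_{\|X\|_\infty=1}\|WX\|_\infty=1/\mu_\infty(\Omega)$. This closes the chain of inequalities and also shows the minimum over $G$ is attained, since the dual LPs are. The main obstacle I anticipate is the careful justification of strong LP duality and the row-by-row decoupling of the matrix norm minimization; both are technically standard but need to be stated cleanly, and one must verify that the $\min$ over $G$ is genuinely attained so that the equality (not merely an infimum) is warranted.
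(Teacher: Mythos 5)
Your proposal is correct. The first half (the inequality $\mu_\infty(\Omega)\ge 1/\gamma$ via $\|X\|_\infty=\|GWX\|_\infty\le\|G\|_{\infty,\infty}\|WX\|_\infty$) coincides exactly with the paper's argument. For the reverse inequality you take a genuinely different route: the paper constructs an optimal left inverse by inverting $W$ on its image and extending each coordinate functional of the inverse map norm-preservingly to all of $\mathbb{R}^m$ via the Hahn--Banach theorem, whereas you decouple $\min_{GW=I}\|G\|_{\infty,\infty}$ into $n$ independent $\ell^1$-minimizations $\min\{\|g\|_1:W^{\mathrm{T}}g=e_i\}$ and evaluate each by LP duality as $\max\{|y_i|:\|Wy\|_\infty\le 1\}$, then interchange maxima and rescale. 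The two arguments rest on the same underlying fact --- the duality between the $\infty$-norm on $\mathbb{R}^m$ and the $1$-norm of representing vectors of linear functionals, which is precisely what a norm-preserving Hahn--Banach extension encodes in this concrete finite-dimensional setting --- but your version stays entirely within finite-dimensional linear programming, makes the attainment of the minimum over $G$ explicit (a point the paper passes over when writing $\min$ rather than $\inf$), and produces the row-by-row formula $\gamma=\max_i\min_{W^{\mathrm{T}}g=e_i}\|g\|_1$, which is in effect the computation the paper carries out by hand for $ET(n,a,b)$ in Section 4. The paper's route is shorter if Hahn--Banach is taken as a black box. The steps you flag as needing care (row decoupling of the $(\infty,\infty)$-norm, feasibility and boundedness of the dual LPs, and the homogeneity rescaling) all check out: primal feasibility follows from $W$ having full column rank, dual boundedness from $\mu_\infty(\Omega)>0$, and the exchange of $\min$ and $\max$ is legitimate because the rows of $G$ are constrained independently.
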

\begin{proof}
By the assumption on $\Omega$,  we know $W$ has full column rank. For any generalized inverse $G\in \mathbb{R}^{n\times m}$ of $W$ satisfying $GW=I$, and any $X\in \mathbb{R}^{n}\backslash \{\mathbf{0}\}$, we have
$$\|X\|_{\infty}= \|GWX\|_{\infty} \leq \|G\|_{\infty, \infty} \cdot \|WX\|_{\infty}.$$
So
\begin{equation}\label{equ1}
\min\{\frac{\|WX\|_{\infty}}{\|X\|_{\infty}}:X\in \mathbb{R}^{n}\backslash \{\mathbf{0}\}\}\geq\frac{1}{\min\limits_{GW=I}\|G\|_{\infty, \infty}}.
\end{equation}

Now we turn to prove there exists a left inverse $G$ of $W$ such that
$$\min\{\frac{\|WX\|_{\infty}}{\|X\|_{\infty}}:X\in \mathbb{R}^{n}\backslash \{\mathbf{0}\}\}=\frac{1}{\|G\|_{\infty, \infty}}.$$
Let $\varphi:\mathbb{R}^{n}\rightarrow \mathbb{R}^{m}:X\mapsto WX$. Then  $\varphi$ is  injective. Therefore the mapping $\widetilde{\varphi}:\mathbb{R}^{n}\rightarrow Im\varphi :X\mapsto WX$ is a
linear isomorphism. Let $\widetilde{\psi}$ be the unique inverse mapping of $\widetilde{\varphi}$.
Note that there exists some $\xi=(\xi_{1}, \xi_{2}, \cdots, \xi_{n})^{T} \in \mathbb{R}^{n} \backslash\{\mathbf{0}\}$ such that
$$c=\frac{\|\varphi(\xi)\|_{\infty}}{\|\xi\|_{\infty}}=\min\{\frac{\|\varphi(X)\|_{\infty}}{\|X\|_{\infty}}:X\in\mathbb{R}^{n}\backslash \{\mathbf{0}\}\}.$$
Take $\eta=\varphi(\xi)$, then$c= \frac{\|\eta\|_{\infty}}{\|\xi\|_{\infty}}$.
Since $\widetilde{\psi}(\eta)=\xi$, by definition of $c$,  we can get
$$\max_{Y\in Im\varphi}\frac{\|\widetilde{\psi}(Y)\|_{\infty}}{\|Y\|_{\infty}}=\frac{\|\widetilde{\psi}(\eta)\|_{\infty}}{\|\eta\|_{\infty}}=\frac{\|\xi\|_{\infty}}{\|\varphi(\xi)\|_{\infty}}
=\frac{1}{c},$$
i.e., $\|\widetilde{\psi}\|_{\infty, \infty}=\frac{1}{c}$.

Write
\begin{align*}
\widetilde{\psi}:&Im\varphi \rightarrow  \mathbb{R}^{n}\\
&\alpha\mapsto (\widetilde{\psi}_{1}(\alpha), \widetilde{\psi}_{2}(\alpha), \cdots, \widetilde{\psi}_{n}(\alpha))^{T}.
\end{align*}
By Hahn-Banach Theorem (see \cite{1}, P. 34), there exists a norm-preserving linear extension
$\psi_{i}: \mathbb{R}^{m}  \rightarrow \mathbb{R}$ of $\widetilde{\psi}_{i}$,   where $i=1, 2, \cdots, n$.
Take
\begin{align*}
\psi:&\mathbb{R}^{m} \rightarrow  \mathbb{R}^{n}\\
&\beta\mapsto (\psi_{1}(\beta), \psi_{2}(\beta), \cdots, \psi_{n}(\beta))^{T}.
\end{align*}
We can get
\begin{align*}
\|\psi\|_{\infty, \infty}& =\max_{Z \in \mathbb{R}^{m} \backslash\{\mathbf{0}\}}\frac{\|\psi(Z)\|_{\infty}}{\|Z\|_{\infty}}\\
&=\max_{ i=1, 2, \cdots, n}\max_{Z \in \mathbb{R}^{m}\backslash\{\mathbf{0}\}}\frac{|\psi_{i}(Z)|}{\|Z\|_{\infty}}\\
&=\max_{ i=1, 2, \cdots, n}\max_{Z\in Im\varphi\backslash\{\mathbf{0}\}}\frac{|\widetilde{\psi}_{i}(Z)|}{\|Z\|_{\infty}}\\
&=\max_{ i=1, 2, \cdots, n}\frac{|\widetilde{\psi}_{i}(\eta)|}{\|\eta\|_{\infty}}\\
&=\|\widetilde{\psi}\|_{\infty, \infty}.
\end{align*}
So $\psi$ is a norm-preserving linear linear extension of $\widetilde{\psi}$, and thus there exists some generalized inverse matrix $G$ of $W$ such that
\begin{equation}\label{equ2}
\|G\|_{\infty, \infty}=\|\psi\|_{\infty, \infty} =\frac{1}{c}=\frac{1}{\min\limits_{\|X\|_{\infty}=1}\|WX\|_{\infty}}.
\end{equation}

Combining equations (\ref{equ1}) and (\ref{equ2}),  we get
$$\mu_{\infty}(\Omega)=\min\limits_{\|X\|_{\infty}=1}\|WX\|_{\infty}=\frac{1}{\min\limits_{WGW=W}\|G\|_{\infty, \infty}}.$$
\end{proof}

\section{An example}\label{sec:4}

As an example, we introduce the definition for the class of non-bipartite bicyclic connected graphs $ET(n, a, b)$ in Definition \ref{def41}.
We compute the minimal $\infty$-norm of generalized inverses of the weighted signless incidence matrices $W$ for $ET(n, a, b)$ in Theorem \ref{thm4.1}. And we construct a vector $Y\in \mathbb{R}^{n} \backslash \{\mathbf{0}\}$ in Remark \ref{rem4.2} such that $$\mu_{\infty}=\frac{{\|WY\|_{\infty}}}{\|Y\|_{\infty}}
=\min\limits_{\|X\|_{\infty}=1}\|WX\|_{\infty}$$
to illustrate the result of Theorem \ref{thm3.1}.

\begin{definition}\label{def41}
Suppose that $n=a+b-1$, where integers $a, b>2$ and $a$ is odd. Let $V=\{v_{1}, v_{2}, \cdots, v_{n}\}$ and $E=\{e_{1}, e_{2}, \cdots, e_{n+1}\}$, where

(1) $e_{k}$ is an edge between $v_{k}$ and $v_{k+1}$,  $k=1, 2, \cdots, a-1$,

(2) $e_{a}$ is an edge between $v_{a}$ and $v_{1}$,

(3) $e_{k}$ is an edge between $v_{k-1}$ and $v_{k}$, $k=a+1, a+2, \cdots, n$,

(4) $e_{n+1}$ is an edge between $v_{n}$ and $v_{a}$.\\
We denote the bicyclic graph $(V,E)$ by $ET(n, a, b)$.
\end{definition}

 For example, $ET(9,3,7)$ is as FIGURE \ref{fig1}.
\begin{figure}[!htbp]
\begin{center}
\tikzstyle{place}=[circle,draw,inner sep=0pt,minimum size=5pt]
\begin{tikzpicture}[scale=4]

\fill  (0,0.6) circle (0.3pt);
\node[left] at (0,0.6) {$v_{1}$};
\fill  (0,0.2) circle (0.3pt);
\node[left] at (0,0.2) {$v_{2}$};
\fill  (0.4,0.4) circle (0.3pt);
\node[right] at (0.4,0.4) {$v_{3}$};
\fill  (0.6,0.8) circle (0.3pt);
\node[above] at (0.6,0.8) {$v_{4}$};
\fill  (1.0,0.8) circle (0.3pt);
\node[above] at (1.0,0.8) {$v_{5}$};
\fill  (1.2,0.6) circle (0.3pt);
\node[right] at (1.2,0.6) {$v_{6}$};
\fill  (1.2,0.2) circle (0.3pt);
\node[right] at (1.2,0.2) {$v_{7}$};
\fill  (1.0,0) circle (0.3pt);
\node[below] at (1.0,0) {$v_{8}$};
\fill  (0.6,0) circle (0.3pt);
\node[below] at (0.6,0) {$v_{9}$};

\draw(0,0.6)--(0,0.2) node[pos=0.5,left] {$e_{1}$};
\draw(0,0.2)--(0.4,0.4)node[pos=0.5,below] {$e_{2}$};
\draw(0.4,0.4)--(0,0.6)node[pos=0.5,above] {$e_{3}$};
\draw(0.4,0.4)--(0.6,0.8) node[pos=0.7,left] {$e_{4}$};
\draw(0.6,0.8)--(1.0,0.8)node[pos=0.5,above] {$e_{5}$};
\draw(1.0,0.8)--(1.2,0.6)node[pos=0.4,right] {$e_{6}$};
\draw(1.2,0.6)--(1.2,0.2)node[pos=0.5,right] {$e_{7}$};
\draw(1.2,0.2)--(1.0,0) node[pos=0.6,right] {$e_{8}$};
\draw(1.0,0)--(0.6,0)node[pos=0.5,below] {$e_{9}$};
\draw(0.6,0)--(0.4,0.4)node[pos=0.3,left] {$e_{10}$};
\end{tikzpicture}
\caption{$ET(9,3,7)$}
\label{fig1}
\end{center}
\end{figure}

\begin{theorem} \label{thm4.1}
Let $W$ be the weighted signless incidence matrix of $ET(n, a, b)$ and $G=(g_{ij})$ be a generalized inverse of $W$. Then we have
$$\min_{WGW=W}\|G\|_{\infty, \infty}=\left\{
\begin{array}{lcl}
a,      &      & {\mathrm{if} \  b \ \mathrm{is} \ \mathrm{odd} \ \mathrm{and} \ a>2b, }\\
2\min\{a, b\},      &      & {\mathrm{if} \ b \ \mathrm{is} \ \mathrm{odd} \ \mathrm{and}  \ a\leq 2b,  \ b\leq 2a, }\\
b,      &      & {\mathrm{if}  \ b \ \mathrm{is} \ \mathrm{odd} \ \mathrm{and}  \ b>2a, }\\
\max\{2a, a+b\},      &      & {\mathrm{if} \  b \ \mathrm{is} \ \mathrm{even}.}
\end{array} \right. $$
\end{theorem}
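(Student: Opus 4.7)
The plan is to invoke Theorem~\ref{thm3.1} and compute $\mu_\infty(ET(n,a,b))$, whose reciprocal is the quantity sought. The upper bound on $\min\|G\|_{\infty,\infty}$ (equivalently, the lower bound on $\mu_\infty$) rests on an \emph{odd closed walk identity}: for any closed walk $v_0,v_1,\ldots,v_L=v_0$ of odd length $L$, writing $y_i=x_i/d_i$, a telescoping sum yields
$$2y_{v_0}=\sum_{k=1}^{L}(-1)^{k-1}(y_{v_{k-1}}+y_{v_k}),$$
so $|x_{v_0}|=d_{v_0}|y_{v_0}|\le (d_{v_0}L/2)\|WX\|_\infty$. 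Setting $L_v$ to be the length of the shortest odd closed walk through $v$ and $M:=\max_{v\in V}d_v L_v/2$, this yields $\|X\|_\infty\le M\,\|WX\|_\infty$ for all $X$, whence $\mu_\infty\ge 1/M$ and $\min_{WGW=W}\|G\|_{\infty,\infty}\le M$ by Theorem~\ref{thm3.1}.

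Next I enumerate $L_v$ using that $ET(n,a,b)$ is obtained by identifying the odd $a$-cycle and the $b$-cycle at $v_a$. Any odd closed walk at $v$ must traverse either the $a$-cycle once (length $a$) or the $b$-cycle once (length $b$, odd only when $b$ is odd), with an even excursion between $v$ and $v_a$ on either side. One finds: for $v=v_a$, $L_{v_a}=\min(a,b)$ if $b$ is odd and $L_{v_a}=a$ if $b$ is even, so $d_{v_a}L_{v_a}/2=2L_{v_a}$; for $v$ on the $a$-cycle at distance $k\in\{1,\ldots,(a-1)/2\}$ from $v_a$, $L_v=\min(a,2k+b)$ when $b$ is odd and $L_v=a$ when $b$ is even, so $d_v L_v/2=L_v$; for $v$ on the $b$-cycle at distance $k\in\{1,\ldots,\lfloor b/2\rfloor\}$ from $v_a$, $L_v=\min(b,2k+a)$ when $b$ is odd and $L_v=2k+a$ when $b$ is even, so $d_v L_v/2=L_v$. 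Maximising over $v$ splits into the four regimes: for $b$ odd, one compares $2\min(a,b)$ (at $v_a$) with the interior maxima $a$ and $b$, giving $a$ if $a>2b$, $b$ if $b>2a$, and $2\min(a,b)$ otherwise; for $b$ even, one compares $2a$ (at $v_a$), $a$ (on the $a$-cycle), and $a+b$ (at the antipode $k=b/2$ on the $b$-cycle), giving $\max(2a,a+b)$. In each case $M$ equals the tabulated value.

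For the matching lower bound on $\min\|G\|_{\infty,\infty}$, I construct an extremal $Y\in\mathbb{R}^n$. Pick a vertex $v^*$ realising $d_{v^*}L_{v^*}/2=M$ and a shortest odd closed walk $\Gamma$ through $v^*$; setting consecutive edge-sums along $\Gamma$ to $s_k=(-1)^{k-1}$ forces $2y_{v^*}=L_{v^*}$ by the identity above, and the recursion $y_{v_k}=s_k-y_{v_{k-1}}$ determines $y$ on every vertex of $\Gamma$ as an alternating arithmetic sequence with peak $L_{v^*}/2$ at $v^*$. Vertices not visited by $\Gamma$ lie on at most one residual arc of the $a$- or $b$-cycle; I extend $y$ there compatibly from its endpoints so that each remaining edge-sum lies in $[-1,1]$. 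The principal obstacle is verifying consistency in the $b$-even case with $v^*$ at the antipode of $v_a$, where $\Gamma$ traverses one arc of the $b$-cycle twice: the alternating-sign prescription is consistent precisely because $a+b$ is odd (as $a$ is odd), and the opposite arc of the $b$-cycle admits a compatible alternating assignment since the boundary $y$-values are matched by symmetry. The resulting $Y$ has $\|Y\|_\infty=M$ and $\|WY\|_\infty=1$, so $\mu_\infty\le 1/M$, giving $\min_{WGW=W}\|G\|_{\infty,\infty}\ge M$ by Theorem~\ref{thm3.1} and completing the proof.
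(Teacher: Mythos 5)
Your argument is correct, but it takes a genuinely different route from the paper's. The paper proves Theorem \ref{thm4.1} entirely on the generalized-inverse side and never invokes Theorem \ref{thm3.1}: it writes down an explicit left inverse $\hat G$ of $W$, parametrizes every generalized inverse as $G=\hat G+\alpha\beta$ with $\beta$ spanning the one-dimensional left null space of $W$, notes that the rows decouple (each row of $G$ has its own free scalar $z_i$), and minimizes each row sum $\sum_j|g_{ij}|$ as a weighted-median problem. You instead reduce via Theorem \ref{thm3.1} to computing $\mu_\infty(ET(n,a,b))$ and evaluate it combinatorially: the telescoping identity over an odd closed walk gives $\|X\|_\infty\le M\,\|WX\|_\infty$ with $M=\max_v d_vL_v/2$, and an explicit extremal vector (essentially the vectors the paper builds separately in Remark \ref{rem4.2} as an a posteriori check of Theorem \ref{thm3.1}) gives the matching inequality. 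I checked your enumeration of $L_v$ (here $v_a$ has degree $4$, all other vertices degree $2$, and the only cycles are the $a$- and $b$-cycles), and the maximization reproduces all four tabulated values; your route has the merit of exposing the answer as a shortest-odd-closed-walk invariant in the spirit of the Borba--Schwerdtfeger formula $q_\infty=2/\max_u l(u)$, at the price of leaning on Theorem \ref{thm3.1}, whereas the paper's computation is self-contained but less conceptual. The one step you should not leave as an assertion is the extension of $y$ off the walk $\Gamma$: when the residual piece is an odd cycle of length $\ell$ through $v_a$ with prescribed value $y_{v_a}=c$, a filling with all edge-sums in $[-1,1]$ exists if and only if $2|c|\le\ell$ (by the same telescoping identity), and one must also verify that the filling keeps $d_v|y_v|\le M$ throughout; both conditions do hold in every regime, but this is precisely where the case hypotheses such as $a\le 2b$ and $b\le 2a$ are consumed, so the explicit formulas of Remark \ref{rem4.2} (or an equivalent verification) belong in the proof rather than being waved through.
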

\begin{proof}

The weighted signless incidence matrix $W$ of $ET(n, a, b)$ is
$$\bordermatrix{
  & 1 &   &   & \cdots &   & a &   &   & \cdots &   & n\cr
1 & \frac{1}{2} & \frac{1}{2} & 0 & \cdots & 0 & 0 & 0 & 0 & \cdots & 0 & 0\cr
  & 0 & \frac{1}{2} & \frac{1}{2} & \cdots & 0 & 0 & 0 & 0 & \cdots & 0 & 0\cr
  & 0 & 0 & \frac{1}{2} & \cdots & 0 & 0 & 0 & 0 & \cdots & 0 & 0\cr
\vdots & \vdots & \vdots & \vdots & \ddots & \vdots & \vdots & \vdots & \vdots & \ddots & \vdots & \vdots\cr
  & 0 & 0 & 0 & \cdots & \frac{1}{2} & \frac{1}{4} & 0 & 0 & \cdots & 0 & 0\cr
a & \frac{1}{2} & 0 & 0 & \cdots & 0 & \frac{1}{4} & 0 & 0 & \cdots & 0 & 0\cr
  & 0 & 0 & 0 & \cdots & 0 & \frac{1}{4} & \frac{1}{2} & 0 & \cdots & 0 & 0\cr
  & 0 & 0 & 0 & \cdots & 0 & 0 & \frac{1}{2} & \frac{1}{2} & \cdots & 0 & 0\cr
\vdots & \vdots & \vdots & \vdots & \ddots & \vdots & \vdots & \vdots & \vdots & \ddots & \vdots & \vdots\cr
 & 0 & 0 & 0 & \cdots & 0 & 0 & 0 & 0 & \cdots & \frac{1}{2} & \frac{1}{2}\cr
n+1 & 0 & 0 & 0 & \cdots & 0 & \frac{1}{4} & 0 & 0 & \cdots & 0 & \frac{1}{2}\cr
}.$$
Since $W$ has full column rank, so its generalized inverse $G$ satisfying $GW=I$.\\
\textbf{Case 1:}
 If $b$ is odd,  then let $\hat{G}$ be
$$
\bordermatrix{
&  1 &   &   &  & \cdots &  &   & a &   &   &  & \cdots &   & n+1\cr
1 & 1 & -1 & 1 & -1 & \cdots & 1 & -1 & 1 & 0 & 0 & 0 &\cdots & 0 & 0\cr
&1 & 1 & -1 & 1& \cdots &-1 & 1 & -1 & 0 & 0 & 0&\cdots & 0 & 0\cr
&-1 & 1 & 1 &-1& \cdots &1& -1 & 1 & 0 & 0 &0 & \cdots & 0 & 0\cr
&1 & -1 & 1 & 1&\cdots & -1 & 1 & -1 & 0 & 0 & 0& \cdots & 0 & 0\cr
\vdots &\vdots & \vdots & \vdots& \vdots & \ddots & \vdots & \vdots & \vdots & \vdots & \vdots & \vdots & \ddots & \vdots & \vdots\cr
&-1 & 1 & -1 & 1& \cdots &1 & -1 & 1 & 0 & 0 &0& \cdots & 0 & 0\cr
&1 & -1 & 1 & -1 &\cdots &1 & 1 & -1 & 0 & 0 &0 & \cdots & 0 & 0\cr
a&-2 & 2 & -2 &2& \cdots & -2 & 2 & 2 & 0 & 0 & 0&\cdots & 0 & 0\cr
&1 & -1 & 1 & -1 &\cdots &1 & -1 & -1 & 2 & 0 & 0&\cdots & 0 & 0\cr
&-1 & 1 & -1 & 1 &\cdots & -1 & 1 & 1 & -2 & 2 & 0&\cdots & 0 & 0\cr
\vdots &\vdots & \vdots & \vdots& \vdots & \ddots & \vdots & \vdots & \vdots & \vdots & \vdots & \vdots & \ddots & \vdots & \vdots\cr
& 1 & -1 & 1 & -1& \cdots &1& -1 & -1 & 2 & -2 &2 & \cdots & 0 & 0\cr
n&-1 & 1 & -1 &1 & \cdots&-1 & 1 & 1 & -2 & 2 & -2&\cdots & 2 & 0\cr
}.$$
We may check that $\hat{G}$ is a generalized inverse matrix of $W$.
Any generalized inverse $G$ of $W$  can be expressed as
$$G=\hat{G}+\alpha\beta,$$
where $\alpha=(z_{1}, z_{2}, \cdots, z_{n})^{\mathrm{T}}$ is an arbitrary vector in $\mathbb{R}^n$,  and
$$\beta=
\bordermatrix{
&1 &   &   &   & \cdots &   &   & a &   &   &   & \cdots &   & n+1\cr
&1 & -1 & 1 & -1 & \cdots & 1 & -1 & -1 & 1 & -1 & 1 & \cdots & -1 & 1\cr
}.$$

As we know,  if $x_i$ is the median of a sequence of real numbers $${m_{i1},m_{i2},\cdots,m_{in}},$$ then $$\sum_{j=1}^{n}|m_{ij}-x_i|=\min\{\sum_{j=1}^{n}|m_{ij}-y|:y\in\mathbb{R}\}.$$

Since $$\min_{WGW=W}\|G\|_{\infty, \infty}=\min_{\alpha}\|G\|_{\infty, \infty}  =\min_{\alpha}\max_{1\leq i\leq n}\sum_{j=1}^{n+1}|g_{ij}|
= \max_{1\leq i\leq n}\min_{\alpha}\sum_{j=1}^{n+1}|g_{ij}|,$$
where $\alpha$ is an arbitrary vector in $\mathbb{R}^n$.
Now we compute $$\min_{\alpha}\sum_{j=1}^{n+1}|g_{ij}|$$ for each row.

(1) For $i=1, 2, \cdots, a-1$,
  let $$s=\left\{
\begin{array}{lcl}
i,      &      & {\mathrm{if} \ i \ \mathrm{is} \ \mathrm{even}, }\\
a-i,      &      & {\mathrm{if} \ i \ \mathrm{is} \ \mathrm{odd}, }
\end{array} \right. $$ and $t=a-s$.
Then $$\sum\limits_{j=1}^{n+1}|g_{ij}|=|-1-z_{i}|\cdot s+|1-z_{i}|\cdot t+|z_{i}|\cdot b.$$
\begin{enumerate}[(i)]
  \item If $s>b+t$,  we take $z_{i}=-1$, then $\min\limits_{\alpha}\sum\limits_{j=1}^{n+1}|g_{ij}|=2t+b<s+t=a.$
  \item If $t>b+s$,  we take $z_{i}=1$, then $\min\limits_{\alpha}\sum\limits_{j=1}^{n+1}|g_{ij}|=2s+b<s+t=a.$
  \item If $s\leq b+t$ and $t\leq b+s$,  we take $z_{i}=0$, then $\min\limits_{\alpha}\sum\limits_{j=1}^{n+1}|g_{ij}|=s+t=a.$
\end{enumerate}
(2) For $i=a$, we have
$$\sum\limits_{j=1}^{n+1}|g_{ij}|=|2-z_{i}|\cdot a+|z_{i}|\cdot b.$$
\begin{enumerate}[(i)]
  \item If $a>b$,  we take $z_{i}=2$, then  $\min\limits_{\alpha}\sum\limits_{j=1}^{n+1}|g_{ij}|=2b.$
  \item If $a=b$,  we take $z_{i}=1$, then  $\min\limits_{\alpha}\sum\limits_{j=1}^{n+1}|g_{ij}|=a+b.$
  \item If $a<b$,  we take $z_{i}=0$, then  $\min\limits_{\alpha}\sum\limits_{j=1}^{n+1}|g_{ij}|=2a.$
\end{enumerate}
(3) For $i=a+1, a+3, \cdots, a+b-2$, let $s=i-a$ and $t=a+b-i$. Then
$$\sum\limits_{j=1}^{n+1}|g_{ij}|=|-1-z_{i}|\cdot a+|-2-z_{i}|\cdot s+|z_{i}|\cdot t.$$
\begin{enumerate}[(i)]
  \item If $s>a+t$,  we take $z_{i}=-2$, then $\min\limits_{\alpha}\sum\limits_{j=1}^{n+1}|g_{ij}|=a+2t<s+t=b.$
  \item If $t>a+s$,  we take $z_{i}=0$, then $\min\limits_{\alpha}\sum\limits_{j=1}^{n+1}|g_{ij}|=a+2s<s+t=b.$
  \item  If $s\leq a+t$ and $t\leq a+s$,  we take $z_{i}=-1$, then  $\min\limits_{\alpha}\sum\limits_{j=1}^{n+1}|g_{ij}|=s+t=b.$
\end{enumerate}
(4) For $i=a+2, a+4, \cdots, a+b-1$,  let $s=i-a$ and $t=a+b-i$. Then
$$\sum\limits_{j=1}^{n+1}|g_{ij}|=|1-z_{i}|\cdot a+|2-z_{i}|\cdot s+|z_{i}|\cdot t.$$
\begin{enumerate}[(i)]
  \item If $s>a+t$,  we take $z_{i}=2$, then $\min\limits_{\alpha}\sum\limits_{j=1}^{n+1}|g_{ij}|=a+2t<s+t=b.$
  \item If $t>a+s$,  we take $z_{i}=0$, then $\min\limits_{\alpha}\sum\limits_{j=1}^{n+1}|g_{ij}|=a+2s<s+t=b.$
  \item If $s\leq a+t$ and $t\leq a+s$,  we take $z_{i}=1$, then  $\min\limits_{\alpha}\sum\limits_{j=1}^{n+1}|g_{ij}|=s+t=b.$
\end{enumerate}

So when $b$ is odd,  $$\min_{WGW=W}\|G\|_{\infty, \infty}=\left\{
\begin{array}{lcl}
a,      &      & {\mathrm{if} \ a>2b, }\\
2\min\{a, b\},      &      & {\mathrm{if} \ a\leq 2b \ \mathrm{and} \ b\leq 2a, }\\
b,      &      & {\mathrm{if} \ b>2a.}\\
\end{array} \right. $$ The proof in Case 1 is complete.\\
\textbf{Case 2:} If $b$ is even,  then let $\hat{G}$ be
$$
\bordermatrix{
&  1 &   &   &  & \cdots &  &   & a &   &   &  & \cdots &   & n+1\cr
1 & 1 & -1 & 1 & -1 & \cdots & 1 & -1 & 1 & 0 & 0 & 0 &\cdots & 0 & 0\cr
&1 & 1 & -1 & 1& \cdots &-1 & 1 & -1 & 0 & 0 & 0&\cdots & 0 & 0\cr
&-1 & 1 & 1 &-1& \cdots &1& -1 & 1 & 0 & 0 &0 & \cdots & 0 & 0\cr
&1 & -1 & 1 & 1&\cdots & -1 & 1 & -1 & 0 & 0 & 0& \cdots & 0 & 0\cr
\vdots &\vdots & \vdots & \vdots& \vdots & \ddots & \vdots & \vdots & \vdots & \vdots & \vdots & \vdots & \ddots & \vdots & \vdots\cr
&-1 & 1 & -1 & 1& \cdots &1 & -1 & 1 & 0 & 0 &0& \cdots & 0 & 0\cr
&1 & -1 & 1 & -1 &\cdots &1 & 1 & -1 & 0 & 0 &0 & \cdots & 0 & 0\cr
a&-2 & 2 & -2 &2& \cdots & -2 & 2 & 2 & 0 & 0 & 0&\cdots & 0 & 0\cr
&1 & -1 & 1 & -1 &\cdots &1 & -1 & -1 & 2 & 0 & 0&\cdots & 0 & 0\cr
&-1 & 1 & -1 & 1 &\cdots & -1 & 1 & 1 & -2 & 2 & 0&\cdots & 0 & 0\cr
&1 & -1 & 1 & -1 &\cdots &1 & -1 & -1 & 2 & -2 & 2&\cdots & 0 & 0\cr
\vdots &\vdots & \vdots & \vdots& \vdots & \ddots & \vdots & \vdots & \vdots & \vdots & \vdots & \vdots & \ddots & \vdots & \vdots\cr
&-1 & 1 & -1 & 1 &\cdots & -1 & 1 & 1 & -2 & 2 & -2&\cdots & 0 & 0\cr
n&1 & -1 & 1 & -1 &\cdots &1 & -1 & -1 & 2 & -2 & 2&\cdots & 2 & 0\cr
}.$$
We may check that $\hat{G}$ is  a generalized inverse matrix of $W$.
Any generalized inverse of $W$  can be expressed as
$$G=\hat{G}+\alpha\beta,$$
where $\alpha=(z_{1}, z_{2}, \cdots, z_{n})^{\mathrm{T}}$ is an arbitrary vector in $\mathbb{R}^n$,  and
$$\beta=
\bordermatrix{
&1 &   & \cdots &   & a &   &   &  & &\cdots &   & n+1\cr
&0 & 0 & \cdots & 0 & 0 & -1 & 1 & -1& 1&\cdots & -1 & 1\cr
}$$

Similarly as in Case 1, now we can get $$\min_{WGW=W}\|G\|_{\infty, \infty}=\min_{\alpha}\|G\|_{\infty, \infty}=\max\{2a, a+b\}.$$ The proof in Case 2 is complete.
\end{proof}

Now we construct a vector $Y\in \mathbb{R}^{n} \backslash \{\mathbf{0}\}$ such that the smallest normalized signless $\infty$-Laplacian eigenvalue
$$\mu_{\infty}=\frac{{\|WY\|_{\infty}}}{\|Y\|_{\infty}}$$
for the non-bipartite bicyclic connected graphs $ET(n, a, b)$.

\begin{remark}\label{rem4.2}
Let $W$ be the weighted signless incidence matrix of $ET(n, a, b)$.
We construct the optimal vector
$Y=(y_{1}, \ y_{2}, \ \cdots, \ y_{n})^{\mathrm{T}}\in \mathbb{R}^{n} \backslash\{\mathbf{0}\}$
as follows, such that
$$\mu_{\infty}=\frac{{\|WY\|_{\infty}}}{\|Y\|_{\infty}}=
\frac{\max\limits_{i\sim j}|\frac{y_i}{d_i}+ \frac{y_j}{d_j}|}{\max\limits_{i}  |y_i|}=\min\limits_{\|X\|_{\infty}=1}\|WX\|_{\infty}=
\frac{1}{\min\limits_{GW=I}\|G\|_{\infty,\infty}}.$$

If $a$ and $b$ are odd, without loss of generality, we only need consider two cases when $a\geq b$ as follows. If $a$, $b$ are both odd, and $b\geq a$, we can easily get the vector $Y$ by exchanging $a$ and $b$ in Case 1 and Case 2.\\
\textbf{Case 1:} If $b$ is odd, and $a>2b$. Suppose that
$Y=(y_{i})\in \mathbb{R}^{n} \backslash \{\mathbf{0}\}$ satisfying
$$ y_{i}=\left\{
\begin{array}{lcl}
(-1)^{i}\cdot(a-2\cdot|\frac{a-1}{2}-i|),     &      & {\mathrm{if} \ i= 1,2,\cdots,a-1,}\\
2,     &      & {\mathrm{if} \ i=a,}\\
(-1)^{i}\cdot(b-2\cdot|a+\frac{b+1}{2}-i|),     &      & {\mathrm{if} \ i= a+1,a+2,\cdots,a+b-1.}\\
\end{array} \right. $$
Then $$\max\limits_{i}  |y_i|=|y_{\frac{a-1}{2}}|=a,$$ $$\max\limits_{i\sim j}  |\frac{y_i}{d_i}+ \frac{y_j}{d_j}|=1.$$ So  $$\mu_{\infty}=\frac{{\|WY\|_{\infty}}}{\|Y\|_{\infty}}=
\frac{\max\limits_{i\sim j}  |\frac{y_i}{d_i}+ \frac{y_j}{d_j}|}{\max\limits_{i}  |y_i|}=\frac{1}{a}.$$\\
\textbf{Case 2:} If $b$ is odd, and $b\leq a\leq 2b$. Suppose that
$Y=(y_{i})\in \mathbb{R}^{n} \backslash \{\mathbf{0}\}$ satisfying
$$y_{i}=\left\{
\begin{array}{lcl}
(-1)^{i}\cdot(b-2i),     &      & {\mathrm{if} \ i= 1,2,\cdots,\frac{b-1}{2},}\\
(-1)^{\frac{b-1}{2}},     &      & {\mathrm{if} \ i= \frac{b+1}{2},\frac{b+1}{2}+1,\cdots,a-\frac{b+1}{2},}\\
(-1)^{a-i}\cdot(b-2(a-i)),     &      & {\mathrm{if} \ i= a-\frac{b-1}{2},a-\frac{b-1}{2}+1,\cdots,a-1,}\\
2b,     &      & {\mathrm{if} \ i=a,}\\
(-1)^{i-a}\cdot[b-2(i-a)],     &      & {\mathrm{if} \ i= a+1,a+2,\cdots,a+\frac{b-1}{2},}\\
(-1)^{a+b-i}\cdot[b-2(a+b-i)],     &      & {\mathrm{if} \ i= a+\frac{b+1}{2},a+\frac{b+1}{2}+1,\cdots,a+b-1.}\\
\end{array} \right. $$
Then $$\max\limits_{i}  |y_i|=|y_a|=2b,$$ $$\max\limits_{i\sim j}  |\frac{y_i}{d_i}+ \frac{y_j}{d_j}|=1.$$
So
$$\mu_{\infty}=\frac{{\|WY\|_{\infty}}}{\|Y\|_{\infty}}=
\frac{\max\limits_{i\sim j}  |\frac{y_i}{d_i}+ \frac{y_j}{d_j}|}{\max\limits_{i}  |y_i|}=\frac{1}{2b}.$$\\
\textbf{Case 3:} If $a$ is odd and $b$ is even, suppose that
the vector $Y\in \mathbb{R}^{n} \backslash \{\mathbf{0}\}$ satisfying
$$ y_{i}=\left\{
\begin{array}{lcl}
(-1)^{i}\cdot(a-2i),     &      & {\mathrm{if} \ i= 1,2,\cdots,\frac{a-1}{2},}\\
(-1)^{a-i}\cdot[a-2\cdot(a-i)],     &      & {\mathrm{if} \ i= \frac{a+1}{2},\frac{a+1}{2}+1,\cdots,a-1,}\\
2a,     &      & {\mathrm{if} \ i=a,}\\
(-1)^{i-a}\cdot[a+2\cdot(i-a)],     &      & {\mathrm{if} \ i= a+1,a+2,\cdots,a+\frac{b}{2},}\\
(-1)^{a+b-i}\cdot[a+2\cdot(a+b-i)],     &      & {\mathrm{if} \ i= a+\frac{b}{2}+1,a+\frac{b}{2}+2,\cdots,a+b-1.}\\
\end{array} \right. $$

When $a>b$, we have $$\max\limits_{i}  |y_i|=|y_a|=2a,$$
 $$\max\limits_{i\sim j}  |\frac{y_i}{d_i}+ \frac{y_j}{d_j}|=1.$$ So  $$\mu_{\infty}=\frac{{\|WY\|_{\infty}}}{\|Y\|_{\infty}}=
\frac{\max\limits_{i\sim j}  |\frac{y_i}{d_i}+ \frac{y_j}{d_j}|}{\max\limits_{i}  |y_i|}=\frac{1}{2a}.$$

When $a<b$, we have $$\max\limits_{i}  |y_i|=|y_{a+\frac{b}{2}}|=a+b,$$ $$\max\limits_{i\sim j}  |\frac{y_i}{d_i}+ \frac{y_j}{d_j}|=1.$$ So $$\mu_{\infty}=\frac{{\|WY\|_{\infty}}}{\|Y\|_{\infty}}=
\frac{\max\limits_{i\sim j}  |\frac{y_i}{d_i}+ \frac{y_j}{d_j}|}{\max\limits_{i} |y_i|}=\frac{1}{a+b}.$$

The computational results from these construction and Theorem \ref{thm4.1} coincide. These verify the identity in Theorem \ref{thm3.1} for $ET(n,a,b)$.
\end{remark}

\medskip

\noindent{\bf Conflict of interest:} The authors declare there is no conflict of interest.

\noindent{\bf Funding:} The research is supported by the Fundamental Research Funds for the Central Universities of China (Grant no. GK202304003).

\end{document}